\newtheorem{theorem}{Theorem}[section]
\newtheorem{proposition}[theorem]{Proposition}
\newtheorem{corollary}[theorem]{Corollary}
\newtheorem{example}[theorem]{Example}
\newtheorem{definition}[theorem]{Definition}
\theoremstyle{definition}
\newcommand\tr{ \mbox{Tr} }
\newcommand\leac{\ll_{\rm ac}}
\renewcommand{\H}{\mathcal{H}}
\newcommand{\essran}{\operatorname{ess-ran}}
\newcommand{\esssup}{\operatorname{ess-sup}}
\newcommand{\esssupp}{\operatorname{ess-supp}}
\begin{document}

\title{An operator-valued Lyapunov theorem}

\author[Sarah Plosker]{Sarah Plosker\textsuperscript{1}}

\author[Christopher Ramsey]{Christopher Ramsey\textsuperscript{1,2}}

\thanks{\textsuperscript{1}Department of Mathematics and Computer Science, Brandon University,
Brandon, MB R7A 6A9, Canada}
\thanks{\textsuperscript{2}Department of Mathematics and Statistics, MacEwan University, Edmonton, AB  T5J 4S2, Canada}

\keywords{operator valued measure, quantum probability measure, atomic and nonatomic measures, Lyapunov Theorem}
\subjclass[2010]{46B22, 46G10, 47G10, 81P15}

\maketitle

\begin{abstract}   
 We generalize Lyapunov's convexity theorem for classical (scalar-valued)   measures to quantum (operator-valued)   measures. In particular, we show that the range of a nonatomic quantum probability measure is a weak$^*$-closed convex set of quantum effects (positive operators bounded above by the identity operator) under a sufficient condition on the non-injectivity of integration. To prove the operator-valued version of Lyapunov's theorem, we must first define the notions of essentially bounded, essential support, and essential range for quantum random variables (Borel measurable functions from a set to the bounded linear operators acting on a Hilbert space). 
\end{abstract}


\section{Introduction} \label{sec:intro}

Lyapunov's theorem (\cite{L1}, and Corollary \ref{thm:ClassLyapunov} below) states that if $\mu_1$, $\mu_2$, $\dots$,  $\mu_n$ are finite positive nonatomic measures on a measurable space $(X,\Sigma)$, then the set of points in $\mathbb{R}^n$ of the form $(\mu_1(A), \mu_2(A), \dots, \mu_n(A))$, with $A$ ranging over the measurable subsets of $X$, is closed and convex. In particular, the range of a single finite positive nonatomic measure is closed and convex. Our goal is to extend this result into the quantum, or operator-valued, setting. 

In quantum mechanics,  a quantum system is described by a   Hilbert space $\mathcal H$. Positive operator-valued measures (POVMs)  and quantum probability measures (POVMs that sum to the identity) are studied in quantum information theory in order to fully describe measurements of a quantum system \cite{busch1996quantum,davies1976quantum,holevo2003statistical}. The more general notion of operator-valued measures (OVMs) has been studied independently of quantum information theory; see  \cite{Larsonetal, Roth, VOVMbook}.

Denote the  C$^*$-algebra  of all bounded operators acting on $\mathcal H$   by $\mathcal B(\mathcal H)$ and recall that the predual of  $\mathcal B(\mathcal H)$ is  $\mathcal{B}(\mathcal H)_*=\mathcal T(\mathcal H)$, the ideal of trace class operators. In the case when $\mathcal H$ is finite-dimensional,  $\mathcal B(\mathcal H)$  and $\mathcal T(\mathcal H)$  coincide as sets, and so often in the finite-dimensional setting authors do not distinguish between the two. Unless stated otherwise, we take $\mathcal H$ to be infinite-dimensional herein. Denote the set of all positive operators of unit trace by $S(\mathcal H)\subset \mathcal T(\mathcal H)$; this is a convex subset of  $\mathcal T(\mathcal H)$. General elements of $S(\mathcal H)$, often denoted by $\rho$, are called \emph{states} or \emph{density operators} and play a critical role in quantum information theory.  Finally, let $X$ be a Hausdorff space and $\mathcal O(X)$  a $\sigma$-algebra of subsets of $X$ (often  $\mathcal O(X)$ is taken to be the  $\sigma$-algebra of Borel sets of $X$ as it will be in the next section).

\begin{definition}\cite{Larsonetal, Paulsen, MPR}
A map $\nu : \mathcal{O}(X) \to \mathcal{B}(\mathcal{H})$ is an \emph{operator-valued measure (OVM)} if it is weakly countably additive: for every countable collection $\{E_k\}_{k \in \mathbb N} \subseteq \mathcal{O}(X)$ with $E_j \cap E_k = \emptyset$ for $j \neq k$ we have
\[
\nu\left(\bigcup_{k\in \mathbb N} E_k \right) = \sum_{k \in \mathbb N} \nu(E_k)\,,
\]
where the convergence on the right side of the equation above is with respect to the ultraweak topology of $\mathcal{B}(\mathcal{H})$. An OVM  $\nu$ is 
\begin{enumerate}[(i)]
    \item \emph{bounded} if $\sup\{\|\nu(E)\| : E\in \mathcal O(X)\} < \infty$,
    \item \emph{self-adjoint} if $\nu(E)^* = \nu(E)$, for all $E\in\mathcal O(X)$,
    \item \emph{positive} if $\nu(E) \in \mathcal{B}(\mathcal H)_+$, for all $E\in \mathcal O(X)$,
    \item \emph{spectral} if $\nu(E_1 \cap E_2) = \nu(E_1)\nu(E_2)$, for all $E_1,E_2\in \mathcal O(X)$
\end{enumerate}
Moreover, an OVM $\nu$ is called a \emph{positive operator-valued measure} (POVM) if it is positive (note that this necessarily implies $\nu$ is bounded), a \emph{positive operator-valued probability measure}  or \emph{quantum probability measure} if it is positive and $\nu(X) = I_{\mathcal{H}}$ (the identity operator in $\mathcal{B}(\mathcal H))$, and is called a \emph{projection-valued measure (PVM)} if it is self-adjoint and spectral (PVMs show up as an important subclass of POVMs in quantum information theory).
\end{definition}

We briefly recall some useful definitions from classical measure theory. 
Let  $X$ be a set, $\Sigma$ a $\sigma$-algebra over $X$,  and $\mu$ a measure on the measurable space $(X,\Sigma)$.  An \emph{atom} for $\mu$ is a set $E$ of nonzero measure such that, for any subset $F\subset E$, either $\mu(F)=0$ or $\mu(F)=\mu(E)$.  A measure   $\mu$ is \emph{atomic} if every set of nonzero measure contains an atom. A measure $\mu$ is \emph{nonatomic} if it has no atoms; that is, if every set of nonzero measure has a subset of different nonzero measure. Note that a measure that is not atomic is not necessarily nonatomic, and vice-versa. A classical partial order on measures is that of absolute continuity: if $\mu_1, \mu_2$ are measures on $(X,\Sigma)$, then 
$\mu_1$ is \emph{absolutely continuous with respect to} $\mu_2$,
denoted by $\mu_1\leac\mu_2$, if $\mu_1(E)=0$ for all $E\in \Sigma$ for which $\mu_2(E)=0$.

The definitions of atomic, nonatomic, and absolutely continuous can be readily adapted, mutatis mutandis, to OVMs; in fact, absolute continuity can be used to compare OVMs from the same space $X$ into different Hilbert spaces, in particular one can compare an OVM with a (classical) measure.

Given an OVM  $\nu: \mathcal O(X) \rightarrow \mathcal{B}(\mathcal H)$, for every state $\rho\in S(\mathcal H)$, we can associate to $\nu$ the induced complex measure $\nu_\rho$ on $X$ defined by
\[
\nu_\rho(E) = \tr(\rho \nu(E)) \,\, \forall E\in \mathcal O(X).
\]
If $\nu$ is a quantum probability measure, then the complex measure $\nu_\rho$ can be interpreted as mapping the measurement event $E$ to the corresponding measurement statistics;  the probability that event $E$ is measured by the quantum probability measure $\nu$ when the system $\mathcal H$ is in state $\rho$ is  $\tr(\rho\nu(E))$.   
Note that $\nu_\rho$ is countably additive since   $\nu$ is weakly countably additive.
It is not difficult to see that $\nu$ and $\nu_\rho$ are  mutually absolutely continuous for any full-rank $\rho\in S(\mathcal H)$.

Assume $\mathcal H$ is separable and let $\{e_n\}$ be an orthonormal basis. As in \cite{FPS, MPR}, denote $\nu_{ij}$ the complex measure   $\nu_{ij}(E) = \langle \nu(E)e_j,e_i\rangle, E\in \mathcal O(X)$. Now, for any full-rank density operator $\rho$ we have $\nu_{ij} \ll_{\rm ac} \nu_\rho$. Thus, by the classical Radon-Nikod\'ym theorem, there is a unique $\frac{d\nu_{ij}}{d\nu_\rho} \in L_1(X, \nu_\rho)$ such that
\[
\nu_{ij}(E) = \int_E \frac{d\nu_{ij}}{d\nu_\rho} d\nu_\rho, \ E\in \mathcal O(X).
\]

\begin{definition}\cite[Definition 2.3]{MPR} Let $\nu:\mathcal{O}(X)\rightarrow \mathcal B(\mathcal H)$ be a POVM and $\rho\in S(\mathcal{H})$ be a full-rank density operator. 
The {\em Radon-Nikod\'ym derivative} of $\nu$ with respect to $\nu_\rho$ is defined to be
\[
\frac{d\nu}{d\nu_\rho} = \sum_{i,j\geq 1} \frac{d\nu_{ij}}{d\nu_\rho} \otimes e_{ij},
\]
where $\{e_{ij}\}$ are the matrix units.
If  $\frac{d\nu}{d\nu_\rho}$  is a valid quantum random variable (in the sense that  it takes every $x$ to a bounded operator), then  $\frac{d\nu}{d\nu_\rho}$   is said to exist.
\end{definition}

If $\nu$ is into a finite-dimensional Hilbert space then $\frac{d\nu}{d\nu_\rho}$ always exists (see \cite{FPS} for further analysis into the finite-dimensional setting). However, for infinite dimensions this derivative is potentially into the unbounded operators and does not exist in such cases. 
If $\frac{d\nu}{d\nu_{\rho_0}}$ exists for one full-rank density operator $\rho_0\in S(\mathcal H)$, then it exists for all full-rank  $\rho\in S(\mathcal H)$ \cite[Corollary 2.13]{MPR}.

The following is a truly infinite-dimensional example, in the sense that it is not merely finite-dimensional living in infinite dimensions, where the 
Radon-Nikod\'ym derivative
exists. 

\begin{example}
Let $\mu$ be Lebesgue measure on $[0,1]$ and let $\{\mu_n\}_{n\geq 1}$ be restrictions of $\mu$ to the intervals $[1/(n+1), 1/n]$, respectively. Define $\nu(E) = {\rm diag}\left(\mu(E), \mu_1(E), \mu_2(E), \dots\right)$ and so $\nu : \mathcal O(X) \rightarrow \mathcal B(\mathcal H)$ is a nonatomic POVM. Taking  $\rho = {\rm diag}(1/2,1/4,\dots)$, we can see that 
\begin{eqnarray*}
\nu_\rho&=&\frac12\mu+\sum_n\frac1{2^{n+1}}\mu_n\\
&=&  \sum_n \frac{2^n+1}{2^{n+1}} \mu_n \textnormal{ and}\\
\frac{d\nu}{d\nu_\rho}&=&\sum_n \frac{2^{n+1}}{2^n+1}\chi_{[1/(n+1), 1/n]} e_{nn},
\end{eqnarray*}
where $e_{nn}$ is the $(n,n)$-matrix unit. Hence, the Radon-Nikod\'ym derivative exists.
\end{example}


An important condition that arises in the study of Lyapunov's theorem (both the operator-valued version herein and the classical version) is a condition on the non-injectivity of integration. This will be described in more detail at the end of Section 2.

Lyapunov himself found a counterexample to the convexity theorem when there is an infinite number of classical measures \cite{L2}. \cite[Chapter IX]{DiestelUhl} also contains counterexamples. These examples all fail because of injectivity. 

In Section \ref{sec:intess} we define quantum random variables, the integral of such a function against a POVM, and concepts in relation to such functions such as  essential support,  essential range, and essentially bounded. In Section \ref{sec:Lyapunov} we present our main results   extending Lyapunov's theorem to positive operator-valued measures. In particular, Corollary \ref{thm:quantumLyapunov}   shows that the range of a nonatomic quantum probability measure is a weak$^*$-closed convex set, which can be viewed as the quantum analogue to Lyapunov's theorem.

\section{Integration and essentially bounded functions}\label{sec:intess}

\begin{definition}
A  Borel measurable function  $f: X \rightarrow \mathcal{B}(\mathcal H)$  between the $\sigma$-algebra generated by the open sets of $X$ and the $\sigma$-algebra generated by the open sets of $\mathcal{B}(\mathcal H)$ is a  {\em quantum random variable}.

Equivalently, $f$ is a quantum random variable if and only if 
\[
x\mapsto \tr(\rho f(x))
\]
are Borel measurable functions for every state $\rho \in S(\mathcal H)$.

A quantum random variable $f: X \rightarrow \mathcal{B}(\mathcal H)$ is 
\begin{enumerate}[(i)]
    \item \emph{bounded} if $\sup\{\|f(x)\| : x\in X\} < \infty$,
    \item \emph{self-adjoint} if $f(x) = f(x)^*$, for all $x\in X$,
    \item \emph{positive} if $f(x) \in \mathcal{B}(\mathcal H)_+$, for all $x\in X$.
\end{enumerate}

\end{definition}
Observe that the complex-valued functions $x\mapsto \tr(\rho f(x))$ are measurable and are thus (classical) random variables. See \cite{FPS, FK, FKP, JK} for more detailed analyses of quantum random variables.

We are interested in integrating a quantum random variable  with respect to a positive operator-valued measure:
\begin{definition}\cite{FPS, MPR} 
Let $\nu : \mathcal O(X) \rightarrow \mathcal B(\mathcal H)$ be a POVM such that $\frac{d\nu}{d\nu_\rho}$ exists.
A positive quantum random variable $f: X \rightarrow \mathcal{B}(\mathcal H)$ is   {\em $\nu$-integrable} if the function
\[
f_s(x) = \tr\left( s\left(\frac{d\nu}{d\nu_\rho}(x) \right)^{1/2} f(x) \left(\frac{d\nu}{d\nu_\rho}(x) \right)^{1/2} \right), \ x\in X
\]
is $\nu_\rho$-integrable for every state $s\in S(\mathcal H)$. 
\end{definition}

If $f: X \rightarrow \mathcal{B}(\mathcal H)$ is a self-adjoint quantum random variable, then $f_+, f_- : X \rightarrow \mathcal{B}(\mathcal H)_+$ defined by 
\[
f_+(x) = f(x)_+ \ \ \textrm{and} \ \ f_-(x) = f(x)_-, \ \ x\in X
\]
are positive quantum random variables \cite[Lemma 2.1]{MPR}. An arbitrary quantum random variable $f: X \rightarrow \mathcal{B}(\mathcal H)$ is then said to be $\nu$-integrable if and only if $({\rm Re} f)_+, ({\rm Re} f)_-, ({\rm Im} f)_+$ and $({\rm Im} f)_-$ are $\nu$-integrable.

\begin{definition}\cite{FPS, MPR}
Let $\nu : \mathcal O(X) \rightarrow \mathcal B(\mathcal H)$ be a POVM such that $\frac{d\nu}{d\nu_\rho}$ exists. If $f: X \rightarrow \mathcal B(\mathcal H)$ is a $\nu$-integrable quantum random variable then the integral of $f$ with respect to $\nu$, denoted $\int_X f d\nu$, is implicitly defined by the formula
\[
\tr\left( s\int_X f d\nu\right) = \int_X f_s d\nu_\rho.
\]
\end{definition}

This definition of the integral of a quantum random variable with respect to a POVM was first defined in the finite-dimensional case in \cite{FPS}. The infinite case was shown to work in \cite{MPR} and one should note that showing that the above definition works takes some proving. 

A good sign that this is a reasonable definition is that for every set $E\in \mathcal O(X)$ we have that
\[
\int_X \chi_E(x)I_\mathcal H d\nu(x) = \nu(E).
\]
For a detailed discussion of this integral and a proof of the above consult \cite{MPR}.

\begin{definition}
Let  $\nu: \mathcal O(X) \rightarrow \mathcal{B}(\mathcal H)$ be a POVM and let    $f: X \rightarrow \mathcal{B}(\mathcal H)$ be a quantum random variable. Then the \emph{essential support} of $f$ is 
 \[
 \esssupp f=X\setminus \bigcup_{\substack{E\in \mathcal O(X) \textnormal{ open}\\ f=0 \textnormal{ a.e.\ in } E}}E.
 \]
\end{definition}

\begin{definition}
Let  $\nu: \mathcal O(X) \rightarrow \mathcal{B}(\mathcal H)$ be an POVM and let    $f: X \rightarrow \mathcal{B}(\mathcal H)$ be a quantum random variable. Then the \emph{essential range} of $f$ is the set $\essran f$ of all $A\in \mathcal{B}(\mathcal H)$ for which $\nu(f^{-1}(U))\neq 0$ for every neighbourhood $U\subseteq \mathcal{B}(\mathcal H)$ of $A$. 
\end{definition}

A standard classical result is that the essential range of a measurable function is equal to the intersection of the closure of the image of the function over all measurable sets whose complement has measure zero. We show this is also the case in the setting of quantum random variables. Although the proof is a straightforward generalization of the classical setting (see \cite[Proposiion 5.45]{FarenickBook}), we include it here for completeness and rigor.

\begin{proposition}
Let   $f: X \rightarrow \mathcal{B}(\mathcal H)$ be a quantum random variable. Then 
\[
\essran f=\bigcap_{\substack{E\in \mathcal O(X)\\\nu(E^c)=0}} \overline{f(E)}.
\]
\end{proposition}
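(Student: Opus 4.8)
The plan is to prove the two set inclusions separately, mirroring the classical argument but keeping track of the fact that $\nu$ is operator-valued and convergence is understood ultraweakly. Throughout, ``a.e.'' and ``measure zero'' refer to the POVM $\nu$ in the sense that $\nu(E)=0$; note that for a positive operator-valued measure, $\nu(E)=0$ is equivalent to $\nu_\rho(E)=0$ for every full-rank state $\rho$, so I can freely pass between the operator statement and a scalar statement about an ordinary measure, which is what makes the classical proof transfer.

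First I would show $\essran f \subseteq \bigcap_{\nu(E^c)=0} \overline{f(E)}$. Fix $A \in \essran f$ and a set $E$ with $\nu(E^c)=0$; I must show $A \in \overline{f(E)}$. Suppose not. Then there is a neighbourhood $U$ of $A$ in $\mathcal B(\mathcal H)$ disjoint from $f(E)$, i.e.\ $f^{-1}(U) \cap E = \emptyset$, whence $f^{-1}(U) \subseteq E^c$. Since $\nu$ is a positive operator-valued measure it is monotone, so $0 \le \nu(f^{-1}(U)) \le \nu(E^c) = 0$, giving $\nu(f^{-1}(U)) = 0$. But this contradicts $A \in \essran f$, which requires $\nu(f^{-1}(U)) \neq 0$ for every neighbourhood $U$ of $A$. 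Hence $A \in \overline{f(E)}$, and since $E$ was arbitrary, $A$ lies in the intersection.

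For the reverse inclusion $\bigcap_{\nu(E^c)=0} \overline{f(E)} \subseteq \essran f$, I would argue by contraposition: take $A \notin \essran f$ and produce a single set $E_0$ with $\nu(E_0^c)=0$ and $A \notin \overline{f(E_0)}$. Since $A \notin \essran f$, there is an open neighbourhood $U$ of $A$ with $\nu(f^{-1}(U)) = 0$. Set $E_0 = f^{-1}(U)^c = f^{-1}(\mathcal B(\mathcal H)\setminus U)$; then $\nu(E_0^c) = \nu(f^{-1}(U)) = 0$, so $E_0$ is one of the sets in the intersection. By construction $f(E_0) \subseteq \mathcal B(\mathcal H)\setminus U$, and since $U$ is open its complement is closed, so $\overline{f(E_0)} \subseteq \mathcal B(\mathcal H)\setminus U$, which excludes $A$. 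Thus $A \notin \overline{f(E_0)}$ and therefore $A$ is not in the intersection, completing this direction.

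The step I expect to require the most care is the measurability and topology of $\mathcal B(\mathcal H)$: the definition of essential range uses neighbourhoods $U \subseteq \mathcal B(\mathcal H)$, and I must be consistent about which topology is meant (the norm topology versus the ultraweak/weak$^*$ topology used elsewhere in the paper), since the preimages $f^{-1}(U)$ must lie in $\mathcal O(X)$ for the monotonicity and measure-zero arguments to make sense. Because $f$ is a quantum random variable by hypothesis, $f$ is Borel measurable into $\mathcal B(\mathcal H)$, so $f^{-1}(U) \in \mathcal O(X)$ for every open $U$, and monotonicity of the positive operator-valued measure handles the rest; once these points are pinned down the argument is the verbatim classical one, and no genuinely new obstacle arises.
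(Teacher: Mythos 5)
Your proof is correct and follows essentially the same route as the paper's: both directions proceed by contradiction/contraposition using the open set $U$, the inclusion $f^{-1}(U)\subseteq E^c$, and monotonicity of the positive operator-valued measure. Your write-up is in fact slightly more careful than the paper's (e.g.\ noting that $\overline{f(E_0)}\subseteq \mathcal B(\mathcal H)\setminus U$ because the complement of $U$ is closed, and flagging that Borel measurability of $f$ guarantees $f^{-1}(U)\in\mathcal O(X)$), so no gap remains.
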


\begin{proof}
Let  $A\in \essran f$ and suppose $E\in \mathcal O(X)$ is such that $\nu(E^c)=0$. This implies $A\in \overline{f(E)}$. Indeed, if that were not the case, then there would be an open set $U$ containing $A$ such that $U\cap \overline{f(E)}=\emptyset$, and so $f^{-1}(U)\subset E^c)$; however, $0 = \nu(E^c) \geq \nu(f^{-1}(U))\neq 0$ (since $A\in \essran f$), a contradiction. It follows that $\essran f\subseteq \overline{f(E)}$ for every  $E\in \mathcal O(X)$ is such that $\nu(E^c)=0$.

Conversely,  let $A\notin \essran f$. Then $\nu(f^{-1}(U))=0$ for some neighbourhood $U$ of $A$. Let $E=f^{-1}(U)^c$. This implies $A\notin \overline{f(E)}$. Indeed, if that were not the case, then $f(E)\cap U\neq \emptyset$, but then $E\cap E^c\neq \emptyset$, a contradiction. It follows that $\overline{f(E)}\subseteq \essran f$ for every $E\in \mathcal O(X)$ such that $\nu(E^c)=0$. 
\end{proof} 

Again, analogous to the classical setting, we define the essential supremum of a quantum random variable. 
\begin{definition}
Let   $f: X \rightarrow \mathcal{B}(\mathcal H)$ be a quantum random variable. The \emph{essential supremum} of $f$ is the quantity
\begin{align*}
\esssup f & =\sup\{\|A\|\,:\, A\in \essran f\}.
\end{align*}
If $\esssup f<\infty$ then $f$ is \emph{essentially bounded} and 
\[
\esssup f =\inf\{M \geq 0 : \nu(f^{-1}(\{A\in \mathcal B(\mathcal H) : \|A\| > M\})) = 0\}.
\]
\end{definition}

Let $\nu : \mathcal O(X) \rightarrow \mathcal B(\mathcal H)$ be a POVM such that $\frac{d\nu}{d\nu_\rho}$ exists for any full-rank density operator $\rho$. 
\cite[Corollary 2.9]{MPR} 
tells us that every quantum random variable that is essentially bounded with respect to $\nu$ is $\nu$-integrable. Denote the set of all such quantum random variables by $\mathcal L^\infty_{\mathcal H}(X,\nu)$. 

As in the classical case, for $f\in \mathcal L^\infty_{\mathcal H}(X,\nu)$ let $\|f\|_\infty = \esssup f$ which is a seminorm on $\mathcal L^\infty_{\mathcal H}(X,\nu)$. Quotienting by the ideal of functions that vanish under this seminorm we get $L^\infty_{\mathcal H}(X,\nu)$. Because $\nu$ and $\nu_\rho$ are mutually absolutely continuous they give the same set of essentially bounded quantum random variables from $X$ to $\mathcal B(\mathcal H)$ by the second part of the definition of essentially bounded.
This implies that
\[
L^\infty_{\mathcal H}(X, \nu) = L^\infty_{\mathcal H}(X,\nu_\rho) \simeq L^\infty(X,\nu_\rho) \ \overline\otimes\ \mathcal B(\mathcal H)
\]
is a Banach space, actually a von Neumann algebra (and so the tensor product is unique by \cite[Section IV.5]{Takesaki}). In the same section of the previous reference, there is the following predual identity
\[
(L^\infty_{\mathcal H}(X, \nu))_* \simeq (L^\infty(X,\nu_\rho) \ \overline\otimes\ \mathcal B(\mathcal H))_* = L^1(X,\nu_\rho) \ \overline\otimes \ \mathcal T(\mathcal H).
\]
Thus $L^\infty_{\mathcal H}(X, \nu)$ carries a weak$^*$-topology. To be clear, on sums of simple tensors these isomorphisms are both given by 
\[
\sum_{i=1}^n f_i(x) \otimes A_i \mapsto \sum_{i=1}^n f_i(x)A_i.
\]
So we consider both $L^\infty_\mathcal H(X,\nu)$ and its predual as sets of equivalence classes of functions from $X$ into $\mathcal B(\mathcal H)$ or $\mathcal T(\mathcal H)$, respectively.

Integration is a very important concept in our analysis and so, for a given POVM  $\nu : \mathcal O(X) \rightarrow \mathcal B(\mathcal H)$   such that $\frac{d\nu}{d\nu_\rho}$ exists for any full-rank density operator $\rho$,  we define $\mathcal E_\nu : L^\infty_\mathcal H(X,\nu) \rightarrow \mathcal B(\mathcal H)$ to be
\begin{equation}\label{eq:E}
\mathcal E_\nu(f) = \int_X fd\nu, \ \ f\in L^\infty_\mathcal H(X,\nu).
\end{equation}
This integral operator is called the \emph{quantum expected value} of $f$ relative to the POVM $\nu$ in \cite{FK, JK, FKP} and denoted $\mathbb{E}_\nu(f)$.

\begin{theorem}\label{lem:w*cont}
Let $\nu: \mathcal{O}(X)\rightarrow \mathcal{B}(\mathcal{H)}$ be a POVM such that $\frac{d\nu}{d\nu_\rho}$ exists. Then $\mathcal E_\nu$ is weak$^*$-weak$^*$-continuous. 
\end{theorem}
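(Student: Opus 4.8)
The plan is to exploit the identification $L^\infty_\mathcal H(X,\nu) \simeq L^\infty(X,\nu_\rho)\,\overline\otimes\,\mathcal B(\mathcal H)$ together with the defining formula for the integral, and to verify weak$^*$-continuity by testing against a predual that generates the weak$^*$-topology. Recall that the predual of $\mathcal B(\mathcal H)$ is $\mathcal T(\mathcal H)$, so the weak$^*$-topology on the target $\mathcal B(\mathcal H)$ is precisely the ultraweak topology, and a net $A_\alpha \to A$ weak$^*$ in $\mathcal B(\mathcal H)$ means $\tr(sA_\alpha)\to\tr(sA)$ for all $s\in\mathcal T(\mathcal H)$. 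Since the trace-class operators are spanned by states $s\in S(\mathcal H)$, it suffices to show that $f\mapsto \tr(s\,\mathcal E_\nu(f))$ is weak$^*$-continuous on $L^\infty_\mathcal H(X,\nu)$ for each fixed state $s$.

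First I would unwind the definition of the integral: by construction,
\[
\tr\left(s\,\mathcal E_\nu(f)\right) = \tr\left(s\int_X f\,d\nu\right) = \int_X f_s\,d\nu_\rho = \int_X \tr\!\left(s\left(\tfrac{d\nu}{d\nu_\rho}\right)^{1/2}\!f\left(\tfrac{d\nu}{d\nu_\rho}\right)^{1/2}\right)d\nu_\rho.
\]
The key observation is that for fixed $s$ and fixed $\rho$, the map $f\mapsto \int_X f_s\,d\nu_\rho$ is a bounded linear functional on $L^\infty_\mathcal H(X,\nu)$ that is realized by integration against a fixed element of the predual $L^1(X,\nu_\rho)\,\overline\otimes\,\mathcal T(\mathcal H)$. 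Concretely, on a simple tensor $f(x)=g(x)A$ with $g\in L^\infty(X,\nu_\rho)$ and $A\in\mathcal B(\mathcal H)$, the integrand becomes $g(x)\,\tr\!\big(s\,(\tfrac{d\nu}{d\nu_\rho}(x))^{1/2}A(\tfrac{d\nu}{d\nu_\rho}(x))^{1/2}\big)$; cyclicity of the trace rewrites the operator coefficient as $\tr\!\big((\tfrac{d\nu}{d\nu_\rho}(x))^{1/2}s(\tfrac{d\nu}{d\nu_\rho}(x))^{1/2}A\big)$, so the functional pairs $f(x)=g(x)A$ against the predual element $x\mapsto g\cdot(\tfrac{d\nu}{d\nu_\rho}(x))^{1/2}s(\tfrac{d\nu}{d\nu_\rho}(x))^{1/2}$. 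Any bounded linear functional induced by pairing with a genuine predual element is, by definition of the weak$^*$-topology, weak$^*$-continuous.

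The main work, and the step I expect to be the obstacle, is verifying that the candidate predual element $\omega_s(x)=(\tfrac{d\nu}{d\nu_\rho}(x))^{1/2}\,s\,(\tfrac{d\nu}{d\nu_\rho}(x))^{1/2}$ genuinely lies in $L^1(X,\nu_\rho)\,\overline\otimes\,\mathcal T(\mathcal H)$, i.e.\ that it is a $\nu_\rho$-Bochner-integrable $\mathcal T(\mathcal H)$-valued function with $\int_X \|\omega_s(x)\|_1\,d\nu_\rho<\infty$. Since $s$ is a state, $\omega_s(x)$ is positive with $\|\omega_s(x)\|_1=\tr(\omega_s(x))=\tr\!\big(s\,\tfrac{d\nu}{d\nu_\rho}(x)\big)=(f_s$ with $f\equiv I)$, and integrating this against $\nu_\rho$ gives $\tr(s\,\nu(X))\le \|\nu(X)\|<\infty$ by boundedness of the POVM. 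This confirms finiteness of the $L^1(\mathcal T(\mathcal H))$-norm; the remaining measurability and the density argument to pass from simple tensors to all of $L^\infty_\mathcal H(X,\nu)$ follow from the fact that $\tfrac{d\nu}{d\nu_\rho}$ exists as an essentially bounded quantum random variable, so $\omega_s$ inherits the required Borel measurability.

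Finally I would assemble these pieces: having exhibited, for each state $s$, a predual element $\omega_s$ with $\tr(s\,\mathcal E_\nu(f))=\langle f,\omega_s\rangle$, weak$^*$-continuity of $f\mapsto \tr(s\,\mathcal E_\nu(f))$ is immediate. Since this holds for every state $s$ and states span a weak$^*$-dense (indeed norm-dense) subset of $\mathcal T(\mathcal H)$ with $\mathcal E_\nu$ uniformly bounded (as $\|\mathcal E_\nu(f)\|\le \|f\|_\infty\|\nu(X)\|$), a standard boundedness-plus-density argument upgrades separate continuity against each $s$ to weak$^*$-weak$^*$-continuity of $\mathcal E_\nu$ itself, completing the proof.
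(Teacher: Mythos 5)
Your proposal is correct, and its skeleton is the same as the paper's: both proofs reduce weak$^*$-weak$^*$-continuity to showing that, for each trace-class $s$, the functional $f\mapsto \tr\big(s\,\mathcal E_\nu(f)\big)$ is implemented by pairing against the fixed element $\omega_s(x)=\big(\tfrac{d\nu}{d\nu_\rho}(x)\big)^{1/2}s\big(\tfrac{d\nu}{d\nu_\rho}(x)\big)^{1/2}$ of the predual $L^1(X,\nu_\rho)\,\overline\otimes\,\mathcal T(\mathcal H)$, via the definition of the integral and cyclicity of the trace. Where you genuinely diverge is in the step both proofs identify as the crux, namely verifying $\omega_s\in L^1(X,\nu_\rho)\,\overline\otimes\,\mathcal T(\mathcal H)$: the paper argues indirectly, pairing $\omega_s$ against the weak$^*$-dense span of simple tensors $h\otimes A$ in $L^\infty_\mathcal H(X,\nu)$ and invoking \cite[Corollary 2.9]{MPR} to see these pairings are finite, whereas you compute the Bochner $L^1$-norm directly from positivity: $\|\omega_s(x)\|_1=\tr\big(s\,\tfrac{d\nu}{d\nu_\rho}(x)\big)$, which integrates to $\tr(s\,\nu(X))<\infty$. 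Your computation is more elementary and arguably more complete, since finiteness of pairings against a dense family does not by itself certify membership in the predual; on the other hand, the paper's duality-style argument sidesteps having to interpret the predual concretely as a Bochner space and to address measurability, which you only gesture at. Two small corrections: existence of $\tfrac{d\nu}{d\nu_\rho}$ means it is a quantum random variable with bounded operator values at each point, \emph{not} that it is essentially bounded (measurability of $\omega_s$ needs only the former, so your argument survives); and your closing ``boundedness-plus-density'' step is unnecessary -- states linearly span $\mathcal T(\mathcal H)$ (every trace-class operator is a combination of four positive ones), and weak$^*$-continuity of a map into $\mathcal B(\mathcal H)$ is, by the universal property of the initial topology, exactly continuity of its composition with every predual functional, so linearity alone finishes the proof. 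Incidentally, your net-based formulation is slightly cleaner than the paper's, which phrases the final step in terms of sequences.
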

\begin{proof}
For every $s\in \mathcal T(\mathcal H)$ we first need to establish that 
\[
x \mapsto \left(\frac{d\nu}{d\nu_\rho}(x)\right)^{1/2} s \left(\frac{d\nu}{d\nu_\rho}(x)\right)^{1/2}
\]
is an element of $L^1(X,\nu_\rho) \bar\otimes \mathcal T(\mathcal H)$.
To this end, consider the bounded linear functional $f\mapsto \int_X\tr(f(x)h(x)A)\,d\nu_\rho(x)$, $h\in L^\infty(X,\nu_\rho), A\in \mathcal B(\mathcal H)$ which is precisely integration against $h$ in the first component and taking the trace against $A$ in the second component of $L^1(X, \nu_\rho) \overline\otimes \mathcal T(\mathcal H)$. In particular, the span of these functionals is dense in $(L^1(X, \nu_\rho) \overline\otimes \mathcal T(\mathcal H))^* = L^\infty_\mathcal H(X,\nu)$.

We then have 
\begin{eqnarray*}
&&\int_X\tr\left( \left(\frac{d\nu}{d\nu_\rho}(x)\right)^{1/2} s \left(\frac{d\nu}{d\nu_\rho}(x)\right)^{1/2}h(x)A\right)\,d\nu_\rho(x)\\
&&\quad =\tr\left(s\int_Xh(x)A\,d\nu(x)\right)\in \mathbb{C}
\end{eqnarray*}
by Corollary 2.9 of \cite{MPR} 
since $h(x)A$ is essentially bounded. Therefore, $$\left(\frac{d\nu}{d\nu_\rho}(x)\right)^{1/2} s \left(\frac{d\nu}{d\nu_\rho}(x)\right)^{1/2} \in L^1(X,\nu_\rho)\ \overline{\otimes}\ \mathcal T(\mathcal H).$$

Now suppose $\psi_n \xrightarrow{wk^*} \psi$ in $L^\infty_\mathcal H(X,\nu)$. By the above, every bounded linear functional of $L^1(X, \nu_\rho) \overline\otimes \mathcal T(\mathcal H)$ is given as $f \mapsto \int_X \tr(f\psi)d\nu_\rho$ for $\psi\in L^\infty_\mathcal H(X,\nu)$.
Then
\begin{eqnarray*}
\tr\left(s\int_X\psi_n(x)\, d\nu(x)\right)&=&\int_X\tr\left( \left(\frac{d\nu}{d\nu_\rho}(x)\right)^{1/2} s \left(\frac{d\nu}{d\nu_\rho}(x)\right)^{1/2}\psi_n(x)\right)\,d\nu_\rho(x)\\
&\rightarrow& \int_X\tr\left( \left(\frac{d\nu}{d\nu_\rho}(x)\right)^{1/2} s\left(\frac{d\nu}{d\nu_\rho}(x)\right)^{1/2}\psi(x)\right)\,d\nu_\rho(x)\\
&=&\tr\left(s\int_X\psi(x)\, d\nu(x)\right),
\end{eqnarray*}
from which it follows that $\int_X\psi_n\,d\nu \xrightarrow{wk^*} \int_X\psi\,d\nu$.
\end{proof}


A very natural problem that arises from the study of integration against a POVM is that of whether $\mathcal E_\nu$ is injective or not.
More specifically, for a POVM $\nu: \mathcal O(X) \rightarrow \mathcal B(\mathcal H)$ we say that integration against $\nu$ is \emph{classically non-injective} if for every $E\in \mathcal O(X)$ such that $\nu(E)\neq 0$ we have that 
\[
f\in L^\infty(E,\nu) \ \longmapsto \ \mathcal E_\nu(f I_\mathcal H)
\]
is non-injective.

It is immediate that integration against any measure with an atom fails this condition. By cardinality alone it is also immediate that integration against a non-atomic finite-dimensional POVM is classically non-injective.

Such a condition is quite essential to proving a convexity result as shown by an example of Uhl \cite[Example 1, Chapter IX]{DiestelUhl}.

\begin{example}[Uhl]
Let $\mu$ be Lebesgue measure on $[0,1]$ and $L^\infty([0,1],\mu) \subset \mathcal B(\mathcal H)$. Define the POVM $\nu : \mathcal O([0,1]) \rightarrow \mathcal B(\mathcal H)$ by 
\[
\nu(E) = \chi_E, \quad \forall E\in \mathcal O([0,1]).
\]
Then the range space $\mathcal R_\nu = \{\nu(E) : E\in \mathcal O([0,1])\}$ is not convex. Note that it can be proven that this non-atomic measure is not classicaly non-injective.
\end{example}


\section{Operator-valued Lyapunov theorem}\label{sec:Lyapunov}

We are now in a position to prove the main theorem of this paper. 

\begin{theorem}\label{thm:rangeconvex}
Let $\nu : \mathcal{O}(X) \rightarrow \mathcal{B}(\mathcal{H})$ be a nonatomic POVM such that $\frac{d\nu}{d\nu_\rho}$ exists and let $\mathcal E_\nu: L^\infty_\mathcal H(X,\nu) \rightarrow \mathcal B(\mathcal H)$ be as above. If integration against $\nu$ is classically non-injective  
then the set $\mathcal R_\nu = \{\nu(E) : E\in \mathcal O(X)\}$ is a weak$^*$-compact convex set in $\mathcal B(\mathcal H)$.
\end{theorem}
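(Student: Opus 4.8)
The plan is to recognize that the range $\mathcal R_\nu$ is governed entirely by the restriction of $\mathcal E_\nu$ to the scalar (identity-valued) functions, and then to run the classical Lindenstrauss extreme-point argument for Lyapunov's theorem in this reduced, $\mathcal B(\mathcal H)$-valued setting, with the classical non-injectivity hypothesis playing the role that nonatomicity plays in the scalar theorem. Concretely, I would define $T_0 : L^\infty(X,\nu_\rho) \to \mathcal B(\mathcal H)$ by $T_0(f) = \mathcal E_\nu(f I_\mathcal H) = \int_X f I_\mathcal H \, d\nu$. Since $\int_X \chi_E I_\mathcal H \, d\nu = \nu(E)$, and since $f \mapsto fI_\mathcal H$ embeds $L^\infty(X,\nu_\rho)$ weak$^*$-continuously into $L^\infty_\mathcal H(X,\nu)$ as the corner $L^\infty(X,\nu_\rho)\otimes I_\mathcal H$, the map $T_0$ is weak$^*$-weak$^*$-continuous by Theorem \ref{lem:w*cont}, and $\mathcal R_\nu = \{T_0(\chi_E) : E \in \mathcal O(X)\}$.

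First I would introduce the scalar effect interval $\mathcal{W} = \{f \in L^\infty(X,\nu_\rho) : 0 \le f \le 1 \text{ a.e.}\}$. This set is convex, norm-bounded, and weak$^*$-closed, hence weak$^*$-compact by Banach--Alaoglu; its extreme points are precisely the $\{0,1\}$-valued functions, i.e.\ the characteristic functions $\{\chi_E\}$. Because $T_0$ is affine and weak$^*$-continuous, $C := T_0(\mathcal{W})$ is a weak$^*$-compact convex subset of $\mathcal B(\mathcal H)$, and since every $\chi_E \in \mathcal{W}$ we obtain $\mathcal R_\nu \subseteq C$ at once. The theorem then reduces to the reverse inclusion $C \subseteq \mathcal R_\nu$, i.e.\ to showing that $T_0$ carries the extreme points of $\mathcal{W}$ onto all of $T_0(\mathcal{W})$ --- the ``bang-bang'' principle.

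For that reverse inclusion I would fix $y \in C$ and consider the fiber $G_y = \{f \in \mathcal{W} : T_0(f) = y\}$, a nonempty weak$^*$-closed (hence weak$^*$-compact) convex set, which by Krein--Milman has an extreme point $f_0$. The crux is to prove $f_0$ is $\{0,1\}$-valued $\nu_\rho$-a.e. Suppose not; then for some rational $\epsilon > 0$ the measurable set $E = \{x : \epsilon \le f_0(x) \le 1-\epsilon\}$ has $\nu(E) \ne 0$ (equivalently $\nu_\rho(E)>0$, by mutual absolute continuity). Classical non-injectivity applied to $E$ produces a nonzero $\phi \in L^\infty(E,\nu)$, which I may take real-valued (splitting into real and imaginary parts and using that $\mathcal E_\nu(gI_\mathcal H)$ is self-adjoint for real $g$) and scaled so that $\|\phi\|_\infty \le \epsilon$, with $\mathcal E_\nu(\phi I_\mathcal H) = 0$. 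Extending $\phi$ by zero off $E$ gives $f_0 \pm \phi \in \mathcal{W}$ with $T_0(f_0 \pm \phi) = y$, so $f_0 = \tfrac12\big((f_0+\phi)+(f_0-\phi)\big)$ is a nontrivial convex combination inside $G_y$, contradicting extremality. Hence $f_0 = \chi_A$ for some $A \in \mathcal O(X)$ and $y = T_0(\chi_A) = \nu(A) \in \mathcal R_\nu$, yielding $C \subseteq \mathcal R_\nu$ and therefore $\mathcal R_\nu = C$, a weak$^*$-compact convex set.

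The main obstacle is exactly this bang-bang step, and it is here that the hypothesis does its work: the perturbation $\phi$ must simultaneously lie in the kernel of $T_0$ over $E$, respect the available ``room'' $\epsilon$, and be real-valued, and the existence of such a $\phi$ is precisely what classical non-injectivity guarantees (its failure is what makes Uhl's example non-convex). I would also have to verify the supporting measure-theoretic points glossed above --- that $\mathcal{W}$ and the fibers $G_y$ are genuinely weak$^*$-closed, that $E$ is measurable with $\nu(E)\neq 0 \Leftrightarrow \nu_\rho(E) > 0$, and that the extreme points of $\mathcal{W}$ are the characteristic functions --- but these are routine. Nonatomicity is the natural standing assumption here (indeed an atom would already violate classical non-injectivity), yet the explicit engine of the convexity argument is the non-injectivity hypothesis, invoked through the extreme-point perturbation.
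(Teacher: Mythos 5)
Your proposal is correct and follows essentially the same route as the paper: the paper's set $\mathcal I$ is exactly your $\mathcal W$ viewed inside $L^\infty_\mathcal H(X,\nu)$ as the corner $\{h : 0\le h\le 1\ \textrm{a.e.}\}\ \overline{\otimes}\ I_\mathcal H$, and both arguments then apply Krein--Milman to the fiber over a point of the image and use classical non-injectivity to perturb a non-characteristic extreme point into a contradiction. If anything, your write-up is slightly more careful than the paper's on two minor points --- extracting a \emph{real-valued} kernel element $\phi$ (via self-adjointness of $\mathcal E_\nu(gI_\mathcal H)$ for real $g$) and exhibiting the set $E=\{x : \epsilon\le f_0(x)\le 1-\epsilon\}$ directly rather than through the essential range --- but these are refinements of the same proof, not a different one.
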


\begin{proof}
The proof follows the same general structure as in the classical setting (see \cite[Theorem 7.26]{FarenickBook} or \cite{Strauss}).
Let  
 \begin{eqnarray*}
 \mathcal{I}&=&\{\psi\in  L^\infty_{\H}(X,\nu)\,:\, \essran \psi \in \{\lambda I_\mathcal H : \lambda \in [0,1]\}\} 
 \\ &=& \{h\in L^\infty(X,\nu_\rho)\,:\, 0\leq h(x)\leq 1\ \textrm{a.a.}\} \ \overline{\otimes} \ I_\mathcal H.
 \end{eqnarray*}
It is well known from the classical case that the set on the left side of the tensor product is weak$^*$-closed and thus weak$^*$-compact (being in its unit ball). Hence, $\mathcal I$ is weak$^*$-compact as well.
 
 It is easy to see that the map $\mathcal{E}_\nu:\mathcal I\rightarrow \mathcal{B}(\mathcal{H})$ defined in equation (\ref{eq:E}) is affine with respect to quantum random variables $\psi$ (although it is not affine with respect to POVMs $\nu$). Since $\mathcal{E}_\nu$ is weak$^*$-weak$^*$-continuous by Theorem \ref{lem:w*cont} we have that $K=\mathcal{E}(\mathcal I)$ is weak$^*$-compact and convex. 
 
 One should note here that $K \subseteq [0, \nu(X)]$ and contains $0$ and $\nu(X) = \mathcal E_\nu(\chi_X(x)I_\mathcal H)$ but it does not have to equal the operator interval $[0,\nu(X)]$. For instance, $K$ would equal $\{\lambda I_\mathcal H : 0\leq \lambda \leq 1\}$ if $\nu = \mu I_\mathcal H$ where $\mu$ is Lebesgue measure on $[0,1]$.

Pick $A\in K$. The set $\mathcal I_A=\{\psi\in \mathcal I\,:\, \ \mathcal{E}_\nu (\psi)=A\}= \mathcal{E}_\nu^{-1}(\{A\})$ is the weak$^*$-continuous inverse image of a weak$^*$-closed set, and so $\mathcal I_A$ is weak$^*$-closed. Clearly $\mathcal I_A$ is convex as well. By the Krein-Milman theorem, there exists $\psi\in \mbox{ext}\ \mathcal I_A$. We claim that $\psi=\chi_E$ for some $E\in \mathcal{O}(X)$. This will complete the proof since we already have that $ \mathcal{R}_\nu = \{\nu(E) : E\in \mathcal O(X)\} \subseteq K$, giving $\mathcal{R}_\nu=K$ is weak$^*$-compact and convex. 

Now, we prove the claim by contradiction: suppose $\psi\in  \mbox{ext}\ \mathcal I_A$ is \emph{not} a characteristic function; that is, $\psi(x) \neq \chi_E(x)I_\mathcal H$ in $L^\infty_\mathcal H(X,\nu)$. 
Thus, the essential range of $\psi$ contains some $\lambda I_\mathcal H$ with $0< \lambda< 1$. Let $\epsilon = \frac{1}{2}\min\{\lambda, 1-\lambda\}$ and define $U = \{B\in \mathcal B(\mathcal H) : \|B- \lambda I_\mathcal H\| < \epsilon\}$. Note that $U$ is open and $\lambda I_\mathcal H\in U$ but $0,I_\mathcal H \notin U$. We then have that $E = \psi^{-1}(U)$ is a Borel set and $\nu(E) \neq 0$.

 So $E$ is a measurable set with $\nu(E)\neq 0$, because $\lambda I_\mathcal H$ is in the essential range of $\psi$. By the fact that integration against $\nu$ is classically non-injective there exists a $\sigma\neq 0 \in L^\infty(E,\nu)$ such that $\mathcal E_\nu(\sigma I_\mathcal H) = 0$. One can consider $\sigma(x) = 0$ on $X\setminus E$ and 
 hence $\psi\pm \frac{\epsilon}{2\|\sigma\|_\infty}\sigma\in \mathcal I$. Moreover, $\mathcal E_\nu(\psi \pm \frac{\epsilon}{2\|\sigma\|_\infty}\sigma) = A$ and so $\psi \pm \frac{\epsilon}{2\|\sigma\|_\infty}\sigma \in \mathcal I_A$ contradicting the assumption that $\psi\in \mbox{ext}\ \mathcal I_A$, and the claim is proven.
\end{proof}

\begin{corollary}\label{thm:quantumLyapunov}
Let $\nu_i : \mathcal{O}(X) \rightarrow \mathcal{B}(\mathcal{H}_i)$ be nonatomic POVMs such that $\frac{d\nu_i}{(d\nu_{i})_\rho}$ exists for $i=1,2,\dots,n$. If integration against $\nu_i$ is classically non-injective then the set of points in $\bigoplus_{i=1}^n \mathcal{B}(\mathcal{H}_i)$ of the form $(\nu_1(E),\nu_2(E),\dots,\nu_n(E))$, with $E$ ranging over all $\mathcal{O}(X)$, is a weak$^*$-closed convex set.
\end{corollary}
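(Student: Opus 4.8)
The plan is to deduce this from Theorem \ref{thm:rangeconvex} by assembling the $\nu_i$ into a single POVM on the direct sum Hilbert space. Set $\mathcal{H} = \bigoplus_{i=1}^n \mathcal{H}_i$ and define $\nu : \mathcal{O}(X) \to \mathcal{B}(\mathcal{H})$ by $\nu(E) = \bigoplus_{i=1}^n \nu_i(E)$, the block-diagonal operator whose $i$-th block is $\nu_i(E)$. Since each $\nu_i$ is a POVM, $\nu$ is positive and weakly countably additive (additivity holding blockwise), hence a POVM. The range $\mathcal{R}_\nu = \{\nu(E) : E \in \mathcal{O}(X)\}$ consists of block-diagonal operators, and the map sending a block-diagonal operator to its tuple of diagonal blocks is a linear weak$^*$-homeomorphism of the block-diagonal subalgebra $\bigoplus_{i=1}^n \mathcal{B}(\mathcal{H}_i) \subseteq \mathcal{B}(\mathcal{H})$ onto $\bigoplus_{i=1}^n \mathcal{B}(\mathcal{H}_i)$. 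Under this identification $\mathcal{R}_\nu$ corresponds exactly to the set of tuples $(\nu_1(E),\dots,\nu_n(E))$, so it suffices to show $\mathcal{R}_\nu$ is weak$^*$-compact and convex and then transport this across the identification (weak$^*$-compact implies weak$^*$-closed, which is what is claimed).

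To invoke Theorem \ref{thm:rangeconvex} I first check that $\nu$ satisfies its hypotheses. For nonatomicity, if $E$ were an atom of $\nu$ then $\nu(E) \neq 0$, so $\nu_{i_0}(E) \neq 0$ for some $i_0$; since $\nu_{i_0}$ is nonatomic there is $F \subseteq E$ with $\nu_{i_0}(F) \neq 0$ and $\nu_{i_0}(F) \neq \nu_{i_0}(E)$, and reading off the $i_0$-th block shows $\nu(F) \neq 0$ and $\nu(F) \neq \nu(E)$, contradicting that $E$ is an atom. For the Radon--Nikod\'ym derivative, by \cite[Corollary 2.13]{MPR} it is enough to exhibit one full-rank $\rho$ for which $\frac{d\nu}{d\nu_\rho}$ exists. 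Choose full-rank states $\rho_i$ on $\mathcal{H}_i$ and weights $t_i > 0$ with $\sum_{i=1}^n t_i = 1$, and put $\rho = \bigoplus_{i=1}^n t_i \rho_i$, which is full-rank on $\mathcal{H}$. Then $\nu_\rho = \sum_{i=1}^n t_i (\nu_i)_{\rho_i}$, and since $\nu_\rho \geq t_i (\nu_i)_{\rho_i} \geq 0$, each $(\nu_i)_{\rho_i}$ is absolutely continuous with respect to $\nu_\rho$ with bounded density $g_i = \frac{d(\nu_i)_{\rho_i}}{d\nu_\rho} \leq 1/t_i$. The operator-valued derivative $\frac{d\nu}{d\nu_\rho}$ is then block-diagonal with $i$-th block $\frac{d\nu_i}{d\nu_\rho} = g_i \cdot \frac{d\nu_i}{d(\nu_i)_{\rho_i}}$ by the chain rule; this is a bounded scalar function times a valid bounded-operator-valued quantum random variable, hence is itself a valid quantum random variable, so $\frac{d\nu}{d\nu_\rho}$ exists.

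The crucial --- and genuinely delicate --- hypothesis to verify is that integration against $\nu$ is classically non-injective. Fixing $E$ with $\nu(E) \neq 0$, what is needed is a single nonzero scalar $\sigma \in L^\infty(E,\nu)$ with $\mathcal{E}_\nu(\sigma I_\mathcal{H}) = \bigoplus_{i=1}^n \int_E \sigma \, d\nu_i = 0$, i.e. one $\sigma$ that is simultaneously annihilated by integration against every $\nu_i$. Testing against states and using $\tr\!\left( s \int_E \sigma \, d\nu_i \right) = \int_E \sigma \, d(\nu_i)_s$, this amounts to annihilating the entire family of complex measures $\{(\nu_i)_s : 1 \leq i \leq n,\ s \in S(\mathcal{H}_i)\}$ at once, which is exactly the content of the non-injectivity hypothesis as applied to the combined measure $\nu = \bigoplus_{i=1}^n \nu_i$ on $E$. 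I expect this step to be the main obstacle, since the existence of a separate nonzero annihilator for each individual $\nu_i$ does not by itself produce a common one: the individual kernels can have infinite codimension in $L^\infty(E,\nu)$ and intersect trivially, so the passage from individual to joint non-injectivity is where the real work (and the force of the hypothesis) lies. Once classical non-injectivity of $\nu$ is in hand, Theorem \ref{thm:rangeconvex} applies verbatim to give that $\mathcal{R}_\nu$ is weak$^*$-compact and convex, and transporting this through the block-diagonal identification above shows that the set of tuples $(\nu_1(E),\dots,\nu_n(E))$ is weak$^*$-closed and convex, completing the proof.
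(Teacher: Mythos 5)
Your construction is exactly the paper's: form $\mathcal{H}=\bigoplus_{i=1}^n\mathcal{H}_i$, set $\nu(E)=\bigoplus_{i=1}^n\nu_i(E)$, and apply Theorem \ref{thm:rangeconvex}, then read off the blocks through the weak$^*$-homeomorphic identification of the block-diagonal algebra with $\bigoplus_{i=1}^n\mathcal{B}(\mathcal{H}_i)$. Your verifications of the routine hypotheses are correct and are in fact more detailed than the paper's proof, which waves them through with ``it is not hard to see'': the nonatomicity argument (an atom $E$ of $\nu$ would have to be an atom of some $\nu_{i_0}$ with $\nu_{i_0}(E)\neq 0$) is right, and so is the existence of $\frac{d\nu}{d\nu_\rho}$ via $\rho=\bigoplus_i t_i\rho_i$, the bound $\frac{d(\nu_i)_{\rho_i}}{d\nu_\rho}\leq 1/t_i$, and the scalar chain rule applied entrywise, invoking \cite[Corollary 2.13]{MPR} to reduce to this one $\rho$.

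The step you flag as the main obstacle is precisely the step the paper does not prove either. Its proof ends with the bare assertion ``the result follows from the last theorem since integration by $\nu$ is classically non-invertible,'' which, under the literal reading of the hypothesis (each $\nu_i$ separately classically non-injective), is exactly the unjustified passage from individual to joint non-injectivity that you describe: one needs, on every $E$ with $\nu(E)\neq 0$, a single nonzero $\sigma\in L^\infty(E,\nu)$ with $\int_E\sigma\,d\nu_i=0$ for all $i$ simultaneously, and the kernel of each map $f\mapsto\mathcal{E}_{\nu_i}(fI_{\mathcal{H}_i})$ is the annihilator of the closed subspace of $L^1$ spanned by the densities of the matrix measures $(\nu_i)_{jk}$ --- each such subspace being proper on every subset does not force their sum to be proper. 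In finite dimensions the gap closes automatically, since $\nu$ is then itself a nonatomic finite-dimensional POVM and the cardinality remark of Section \ref{sec:intess} applies; in infinite dimensions neither your argument nor the paper's establishes it. If instead one reads the hypothesis as asserting classical non-injectivity of the combined measure $\nu$ (equivalently, of the joint map $f\mapsto(\int f\,d\nu_1,\dots,\int f\,d\nu_n)$), then your proof, and the paper's, is complete. So you have not introduced an error; you have correctly located the one point at which the paper's proof of this corollary is an assertion rather than an argument, and your proposal should either adopt the stronger (joint) reading of the hypothesis explicitly or supply the missing implication.
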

\begin{proof}
Define $\nu : \mathcal O(X) \rightarrow \mathcal B(\mathcal H)$ as $\nu(E) = \bigoplus_{i=1}^n \nu_i(E)$ on $\mathcal H = \bigoplus_{i=1}^n \mathcal H_i$. It is not hard to see that $\nu$ is a nonatomic POVM and $\frac{d\nu}{d\nu_\rho}$ exists. The result follows from the last theorem since integration by $\nu$ is classically non-invertible.
\end{proof}

Note that if $X$ is a discrete set, then the range of $\nu$ cannot be convex, and so the above corollary shows that nonatomic POVMs such that $\frac{d\nu_i}{(d\nu_{i})_\rho}$ exists for $i=1,2,\dots,n$ have fundamentally different range spaces compared to atomic POVMs.

Lyapunov's  Convexity Theorem follows as a one-dimensional consequence of Corollary \ref{thm:quantumLyapunov}:

\begin{corollary}\cite{Strauss}\label{thm:ClassLyapunov} Let $\{\mu_1,\mu_2,\dots,\mu_n\}$  be finite positive nonatomic measures on $\Sigma$. Then the set of points in $\mathbb{R}^n$ of the form $(\mu_1(E),\mu_2(E),\dots,\mu_n(E))$, with $E$ ranging over all $\Sigma$, is a closed convex set.
\end{corollary}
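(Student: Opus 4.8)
The plan is to derive Corollary~\ref{thm:ClassLyapunov} as the scalar specialization of Corollary~\ref{thm:quantumLyapunov}. The idea is that finite positive measures are exactly one-dimensional POVMs, and in one dimension every nonatomic measure automatically satisfies all the hypotheses of the quantum theorem, so the classical statement falls out with essentially no extra work.

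First I would take $\mathcal H_i = \mathbb C$ for each $i = 1, \dots, n$, so that $\mathcal B(\mathcal H_i) \cong \mathbb C$ and each $\mu_i$, being a finite positive measure on $(X,\Sigma)$, is literally a POVM $\nu_i : \mathcal O(X) \to \mathcal B(\mathcal H_i)$ (identifying $\mathcal O(X)$ with $\Sigma$). Each $\mu_i$ is nonatomic by hypothesis. Next I would verify the two remaining hypotheses of Corollary~\ref{thm:quantumLyapunov}. Existence of $\frac{d\nu_i}{(d\nu_i)_\rho}$ is immediate: in dimension one the only state is $\rho = 1$, the induced measure $(\nu_i)_\rho$ equals $\mu_i$ itself, and the Radon--Nikod\'ym derivative is the constant function $1$, which is trivially a bounded (hence valid) quantum random variable. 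For classical non-injectivity I would argue that for any $E \in \Sigma$ with $\mu_i(E) \neq 0$, the map $f \in L^\infty(E,\mu_i) \mapsto \int_E f\, d\mu_i$ fails to be injective: since $\mu_i$ is nonatomic, $E$ splits into two subsets of positive measure, and one can build two distinct essentially bounded functions with the same integral (for instance, suitably scaled characteristic functions of these subsets differ but integrate to the same value). This is the only step requiring a genuine measure-theoretic observation, and it is exactly the remark already made in the text that nonatomicity (together with the cardinality/splitting argument) forces non-injectivity of integration.

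With all hypotheses checked, I would invoke Corollary~\ref{thm:quantumLyapunov} directly: the set of points $(\nu_1(E), \dots, \nu_n(E)) = (\mu_1(E), \dots, \mu_n(E))$, as $E$ ranges over $\Sigma$, is a weak$^*$-closed convex subset of $\bigoplus_{i=1}^n \mathcal B(\mathcal H_i) \cong \mathbb C^n$. Finally I would translate back to the real-valued statement: since each $\mu_i$ is a positive measure, the coordinates are real and the set lives in $\mathbb R^n \subset \mathbb C^n$, and in finite dimensions the weak$^*$ topology coincides with the norm topology, so weak$^*$-closed is simply closed. This yields the closed convex subset of $\mathbb R^n$ claimed.

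The main obstacle I anticipate is the classical non-injectivity verification, since it is the one place where nonatomicity must actually be used rather than merely passed along; the splitting argument is routine but is the logical heart of the reduction. Everything else is a matter of unwinding the one-dimensional identifications and observing that in $\mathbb C^n$ the weak$^*$ and norm topologies agree.
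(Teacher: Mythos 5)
Your proposal is correct and is essentially the paper's own argument: the paper gives no explicit proof, merely noting that the corollary is the one-dimensional consequence of Corollary~\ref{thm:quantumLyapunov} (with the non-injectivity hypothesis handled by the remark in Section~2 that integration against any nonatomic finite-dimensional POVM is classically non-injective). You have simply filled in the same details correctly --- the identification $\mathcal B(\mathbb C)\cong\mathbb C$, the trivial Radon--Nikod\'ym derivative, the splitting argument for non-injectivity, and the coincidence of the weak$^*$ and norm topologies in finite dimensions.
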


Our methods rely on the existence of the Radon-Nikod\'ym derivative $\frac{d\nu}{d\nu_\rho}$ (which, as we mentioned before, always exists when $\mathcal H$ is finite-dimensional). In the absence of this derivative there are easy examples where one can see that Lyapunov's theorem will still hold but we do not have a proof of the general situation. One such case is the following:

\begin{example}
Let $\{\mu_n\}_{n\geq 1}$ be a set of mutually singular nonatomic probability measures on the measure space $(X, \mathcal O(X))$. Define $\nu(E) = {\rm diag}(\mu_1(E), \mu_2(E), \dots)$ and so $\nu : \mathcal O(X) \rightarrow \mathcal B(\mathcal H)$ is a nonatomic quantum probability measure. By the same reasoning as \cite[Example 2.4]{MPR} it is easy to see that $\frac{d\nu}{d\nu_\rho}$ does not exist (just take $\rho = {\rm diag}(1/2,1/4,\dots)$).

Let $X_n = \esssupp \mu_n$ and by the mutually singular assumption we have that $X_n \cap X_m = \emptyset$. For any infinite tuple $\lambda = (\lambda_1,\lambda_2, \dots)$ with $0\leq \lambda_n\leq 1$ we can find by Lyapunov's theorem above that there exists $E_n\in \mathcal O(X)$ such that $E_n\subseteq X_n$ and $\mu_n(E_n) = \lambda_n$.
Thus, for $E = \cup_{n\geq 1} E_n \in \mathcal O(X)$ we have $\nu(E) = {\rm diag}(\lambda_1,\lambda_2,\dots)$. Therefore, $\mathcal R_\nu = \{\nu(F) : F\in\mathcal O(X)\}$ is a weak$^*$-closed and convex subset in the operator interval $[0,I_\mathcal H]$.
\end{example}

\section*{Acknowledgements}
 S.P.\ was supported by NSERC Discovery Grant number 1174582, the Canada Foundation for Innovation, and the Canada Research Chairs Program. S.P.\ thanks Doug Farenick for helpful discussions at the initial stage of this work.


\end{document}